\documentclass[11pt]{amsart}

\usepackage{import}




\newcommand{\tr}{\operatorname{tr}} 
\newcommand{\rk}{\operatorname{rk}} 
\renewcommand{\div}{\operatorname{div}} 



\newcommand{\prodesc}[2]{\langle #1,#2 \rangle} 



\newcommand{\End}{\operatorname{End}}

\newcommand{\SV}{\mathrm{SV}}
\newcommand{\GF}{\mathrm{GF}} %
\newcommand{\SSCV}{\mathrm{SSCV}} %
\newcommand{\JV}{\mathrm{JV}} %


\newcommand{\so}{\mathfrak{so}}

\newcommand{\Lcal}{\mathcal{L}}

\newcommand{\Rcal}{\mathcal{R}}

\newcommand{\Gcal}{\mathcal{G}}
\newcommand{\Riccal}{\mathcal{R}\hspace{-.75pt}\mathrm{ic}}


\newcommand{\Cinf}{C^\infty}




\newcommand{\der}[2]{\frac{d #1}{d #2}} 


\setlength{\textwidth}{\paperwidth}
\addtolength{\textwidth}{-2.8in} 
\calclayout

\newtheorem{theorem}{Theorem}

\newtheorem{lemma}[theorem]{Lemma}
\newtheorem{proposition}[theorem]{Proposition}

\theoremstyle{definition}
\newtheorem{definition}[theorem]{Definition}
\theoremstyle{remark}

\usepackage{enumerate}

\usepackage{mathrsfs}
\usepackage[
	scr=boondox,
	cal=dutchcal 
]{mathalpha}

\usepackage{xcolor}
\definecolor{color-citas}{RGB}{9,21,128}

\usepackage{hyperref}
\hypersetup{
    colorlinks=true,
    linkcolor=color-citas,
    citecolor=color-citas
}

\usepackage{cleveref}

\begin{document} 

\renewcommand{\bibname}{References} 

\title[On the equivalence of generalized Ricci curvatures]{On the equivalence of\\  generalized Ricci curvatures}
\author{Gil R. Cavalcanti}
\author{Jaime Pedregal}
\address{Department of Mathematics, Utrecht University, 3508 TA Utrecht, The Netherlands}
\email{g.r.cavalcanti@uu.nl, j.pedregalpastor@uu.nl}

\author{Roberto Rubio}
\address{Universitat Aut\`onoma de Barcelona, 08193 Barcelona, Spain}
\email{roberto.rubio@uab.es}


\thanks{This project has been supported by FEDER/AEI/MICINN through the grant PID2022-137667NA-I00 (GENTLE). G.C. and J.P. are also supported by the Open Competition grant number OCENW.M.22.264 from NWO, and R.R. by AGAUR under 2021-SGR-01015 and by FEDER/AEI/MICINN under RYC2020-030114-I}

\begin{abstract}
We prove the equivalence between the several notions of generalized Ricci curvature found in the literature. As an application, we characterize when the total generalized Ricci tensor is symmetric.

\end{abstract}

\maketitle
\section{Introduction}

In the context of generalized geometry and Courant algebroids, the notion of generalized Ricci curvature has found applications, among others, in supergravity \cite{coimbra,garcia-fernandez-heterotic}, string effective actions \cite{jurco-vysoky,jurco-moucka-vysoky} and generalized Ricci flow \cite{pulmann-severa-youmans,garcia-fernandez-streets,streets-strickland-constable-valach}.

However, the definitions used differ and it was drawn to our attention by García-Fernández that, apart from the case of string algebroids (which the authors of \cite{garcia-fernandez-gonzalez-molina-streets} were aware of), no general proof of their equivalence is known. The purpose of this note is to establish this equivalence and also show how it can be applied. 


\bigskip

We recall that a Courant algebroid $(E\to M,\rho,\prodesc{\cdot}{\cdot},[\cdot,\cdot])$ consists of a vector bundle $E$ (whose sections will be denoted by $a,b,c,e\in\Gamma(E)$), a symmetric bilinear nondegenerate 2-form  $\prodesc{\cdot}{\cdot}$ on $E$, an anchor map (vector bundle morphism) $\rho:E\to TM$ and a bilinear bracket $[\cdot,\cdot]$ on $\Gamma(E)$ such that
\begin{enumerate}[1)]
	\item $[a,[b,c]]=[[a,b],c]+[b,[a,c]]$,
	\item $\Lcal_{\rho a}\prodesc{b}{c}=\prodesc{[a,b]}{c}+\prodesc{b}{[a,c]}$,
	\item $2[a,a]=\rho^*d\prodesc{a}{a}$, where we identify $E^*\cong E$ via $\prodesc{\cdot}{\cdot}$.
\end{enumerate}
As a consequence, $[a,fb]=f[a,b]+(\Lcal_{\rho a}f)b$, where $f\in\Cinf(M)$ from now on.


\begin{definition}
A \textbf{ generalized connection} on $E$ is a linear operator $D:\Gamma(E)\to\Gamma(E^*\otimes E)$ such that
\[
D(fa)=fDa+\rho^*df\otimes a,\quad\rho^*d\prodesc{a}{b}=\prodesc{Da}{b}+\prodesc{a}{Db}.
\]
We shall use the notation $D_b a:=(Da)b$. 
\end{definition}

The na\"ive curvature is the linear operator $\Rcal_0:\Gamma(E)^2\to\Gamma(\so(E))$ given by
\begin{equation}\label{eq:R-naive}
\Rcal_0(a,b)c:=D_aD_bc-D_bD_ac-D_{[a,b]}c.     
\end{equation}
From the very start of the theory of generalized connections it was observed that this curvature is not fully tensorial: it is tensorial in the second component, but not in the first one:
\[
\Rcal_0(fa,b)=f\Rcal_0(a,b)+\prodesc{a}{b}D_{\rho^*df}.
\]

It was later understood that it is actually a Keller--Waldmann form, in the sense of \cite{keller-waldmann,cueca-mehta}. Nevertheless, there have been different attempts to make it tensorial, leading to different notions of generalized curvature.


The first approach to make it tensorial consists of letting $a$ and $b$ take values in orthogonal bundles:  either both in an isotropic subbundle of $E$ \cite{gualtieri} or each one in a different orthogonal subbundle of a generalized metric \cite{garcia-fernandez-heterotic,garcia-fernandez-spinors}.

\begin{definition}
A \textbf{generalized metric} on $E$ is a subbundle $V_+\subseteq E$ such that $\prodesc{\cdot}{\cdot}_{|V_+}$ is nondegenerate.
\end{definition}
We then have that $V_-:=V_+^\perp$ is also a generalized metric and that $E=V_+\oplus V_-$. 
A generalized metric is equivalently given by an endomorphism $\Gcal:E\to E$ such that $\Gcal^2=1$ and $\Gcal^*=\Gcal$, where $^*$ denotes the adjoint with respect to $\prodesc{\cdot}{\cdot}$ (in this case, $V_\pm=\ker(\Gcal\mp 1)$). For $a\in E$ we will denote by $a_\pm:=\frac{1}{2}(1\pm\Gcal)a\in V_\pm$ the projections onto $V_\pm$.

\begin{definition}[\cite{garcia-fernandez-heterotic}]
The curvature tensor fields $\Rcal^\pm_\GF\in\Gamma(V_\pm^*\otimes V_\mp^*\otimes\so(E))$ are given by
\[
\Rcal^\pm_\GF(a_\pm,b_\mp):=\Rcal_0(a_\pm,b_\mp).\qedhere
\]
\end{definition}
When $D$ is compatible with $\Gcal$ (to be defined below), the tensors $\Rcal^\pm_\GF$ satisfy two of the four symmetries of the Riemann tensor \cite[Prop. 4.1]{garcia-fernandez-spinors}, as the other two are not even well defined. 


An alternative approach to tensoriality is to tweak \eqref{eq:R-naive} in such a way that it also satisfies all four symmetries of the classical Riemann tensor \cite[Prop. 4.10, Thm. 4.13]{jurco-vysoky}.

\begin{definition}[\cite{jurco-vysoky}]
The curvature tensor $\Rcal_\JV\in\Gamma(E^*\otimes E^*\otimes\so(E))$ is given by
\[
\prodesc{\Rcal_\JV(a,b)c}{e}:=\frac{1}{2}(\prodesc{\Rcal_0(a,b)c}{e}+\prodesc{\Rcal_0(c,e)a}{b}+\prodesc{(Da)^*b}{(Dc)^*e}).
\] 
\end{definition}

To any of these curvatures we can associate a Ricci tensor. This is a concept that plays a central role in generalized geometry and, according to \cite{streets-strickland-constable-valach}, can be traced back, although not in a generalized-geometric way, to \cite{siegel}. 

The aim of this note is to prove that the Ricci tensors induced by the two curvatures above, together with yet a third definition of the Ricci tensor, are equivalent, Theorems \ref{thm:ricci} and \ref{thm:ricci-2}. As an application, in Proposition \ref{prop:symmetric-ricci}, we characterize when the total generalized Ricci tensor is symmetric, answering a question raised in \cite{garcia-fernandez-spinors}.


\section{Generalized Ricci curvatures of a connection}

For $\Rcal^\pm_{\GF}$, we consider \textbf{metric} connections, those $D$ such that $D_a(\Gamma(V_\pm))\subseteq\Gamma(V_\pm)$ for all $a\in\Gamma(E)$, or, equivalently, $D\Gcal=0$. Note that in this case,
\[
\Rcal_\GF^\pm\in\Gamma(V_\pm^*\otimes V_\mp^*\otimes\so(V_\pm))\oplus\Gamma(V_\pm^*\otimes V_\mp^*\otimes \so(V_\mp)).
\]
Only the first piece is relevant, since the other one can be recovered from $\Rcal_\GF^\mp$. Indeed, it is immediate to see that
\[
\prodesc{\Rcal_\GF^\pm(a_\pm,b_\mp)c_\mp}{e_\mp}=-\prodesc{\Rcal_\GF^\mp(b_\mp,a_\pm)c_\mp}{e_\mp}.
\]
To avoid redundant information, from now on we will consider only the first piece, i.e.,
\[
\Rcal_\GF^\pm\in\Gamma(V_\pm^*\otimes V_\mp^*\otimes\so(V_\pm)).
\]

\begin{definition}[\cite{garcia-fernandez-spinors}]
Let $\Gcal$ be a generalized metric on $E$ and $D$ a metric connection. The tensor $\Riccal_{\GF}^\pm\in\Gamma(V_\mp^*\otimes V_\pm^*)$ is given by
\[
\Riccal_{\GF}^\pm(a_\mp,b_\pm):=\tr_\pm(\Rcal^\pm_{\GF}(\cdot,a_\mp)b_\pm),
\]
where $\tr_\pm$ is the trace on $V_\pm$.
\end{definition}

For $\Rcal_\JV$ we can give a direct definition.

\begin{definition}[\cite{jurco-vysoky}] The tensor $\Riccal_\JV\in\Gamma(E^*\otimes E^*)$ is given by
\[
\Riccal_\JV(a,b):=\tr(\Rcal_\JV(\cdot,a)b).
\]
\end{definition}

And, in the presence of a metric $\Gcal$, the following tensor has also been considered recently. 

\begin{definition}[\cite{streets-strickland-constable-valach}]
   The tensor $\Riccal_{\SSCV}\in\Gamma(E^*\otimes E^*)$ is given by
\[
\Riccal_\SSCV(a,b):=\Riccal_\JV(a,b)-\Riccal_\JV(\Gcal a,\Gcal b).
\]
\end{definition}
Note that for $a_\pm,b_\pm\in V_\pm$ we have that
\[
\Riccal_\SSCV(a_\pm,b_\pm)=0\qquad\text{and}\qquad \Riccal_\SSCV(a_\pm,b_\mp)=2\Riccal_\JV(a_\pm,b_\mp).
\]

We show now the equivalence of these three tensors:

\begin{theorem} \label{thm:ricci}
For $\Gcal$ a generalized metric and $D$ a metric connection on $E$, given any $a_\pm,b_\pm\in\Gamma(V_\pm)$,
	\[ \Riccal_\SSCV(a_\mp,b_\pm)=2\Riccal_\JV(a_\mp,b_\pm)=\Riccal^\pm_\GF(a_\mp,b_\pm)+\Riccal^\mp_\GF(b_\pm,a_\mp).
	\]
\end{theorem}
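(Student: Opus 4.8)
The plan is to reduce the statement to its middle and right expressions, since the left equality $\Riccal_\SSCV(a_\mp,b_\pm)=2\Riccal_\JV(a_\mp,b_\pm)$ is exactly the observation recorded just before the theorem (with the roles of $+$ and $-$ interchanged). The genuine content is therefore the identity $2\Riccal_\JV(a_\mp,b_\pm)=\Riccal^\pm_\GF(a_\mp,b_\pm)+\Riccal^\mp_\GF(b_\pm,a_\mp)$. First I would fix a local frame $\{\epsilon_i\}$ of $E$ adapted to the splitting $E=V_+\oplus V_-$, with metric-dual frame $\{\epsilon^i\}$; because $V_+\perp V_-$, the dual frame also respects the splitting, so each $\epsilon_i$ and each $\epsilon^i$ lies in a single $V_+$ or $V_-$. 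Writing $\Riccal_\JV(a_\mp,b_\pm)=\tr(\Rcal_\JV(\cdot,a_\mp)b_\pm)=\sum_i\prodesc{\Rcal_\JV(\epsilon_i,a_\mp)b_\pm}{\epsilon^i}$ and substituting the three-term definition of $\Rcal_\JV$ produces three sums to analyze. Throughout I would use that a metric connection forces $\Rcal_0(a,b)$ to commute with $\Gcal$, so that $\Rcal_0(a,b)\in\so(V_+)\oplus\so(V_-)$ preserves the splitting.

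Two of the three sums are routine. The first, $\sum_i\prodesc{\Rcal_0(\epsilon_i,a_\mp)b_\pm}{\epsilon^i}$, collapses to a trace over $V_\pm$: since $\Rcal_0(\epsilon_i,a_\mp)b_\pm\in V_\pm$, it pairs nontrivially only with $\epsilon^i\in V_\pm$, and this reproduces $\Riccal^\pm_\GF(a_\mp,b_\pm)$ on the nose. The third sum, $\sum_i\prodesc{(D\epsilon_i)^*a_\mp}{(Db_\pm)^*\epsilon^i}$, vanishes term by term: if $\epsilon_i\in V_\pm$ then $(D\epsilon_i)^*a_\mp=0$ because $D_w\epsilon_i\in V_\pm\perp a_\mp$ for all $w$, while if $\epsilon_i\in V_\mp$ then $\epsilon^i\in V_\mp$ and $(Db_\pm)^*\epsilon^i=0$ because $(Db_\pm)^*$ annihilates $V_\mp$ (as $\prodesc{(Db_\pm)^*y}{x}=\prodesc{y}{D_xb_\pm}$ and $D_xb_\pm\in V_\pm$); these two cases exhaust the frame.

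The second sum, $\sum_i\prodesc{\Rcal_0(b_\pm,\epsilon^i)\epsilon_i}{a_\mp}$, is where the work lies and is the step I expect to be the main obstacle, precisely because $\Rcal_0$ is neither tensorial nor antisymmetric in its first slot. I would first use the $\so(E)$-antisymmetry of $\Rcal_0(b_\pm,\epsilon^i)$ to move the pairing onto $a_\mp$, then a frame/dual-frame swap to rewrite the contraction as $-\sum_j\prodesc{\Rcal_0(b_\pm,\epsilon_j)a_\mp}{\epsilon^j}$, which by the grading restricts to $\epsilon_j\in V_\mp$. The crucial final move is the polarized Courant identity $\Rcal_0(a,b)+\Rcal_0(b,a)=-D_{\rho^*d\prodesc{a}{b}}$: applied to $a=\epsilon_j\in V_\mp$ and $b=b_\pm\in V_\pm$ its right-hand side vanishes identically because $\prodesc{\epsilon_j}{b_\pm}=0$, so $\Rcal_0(b_\pm,\epsilon_j)=-\Rcal_0(\epsilon_j,b_\pm)$ and the sum becomes exactly $\Riccal^\mp_\GF(b_\pm,a_\mp)$.

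Assembling the three sums with the prefactor $\tfrac12$ from the definition of $\Rcal_\JV$ then gives $2\Riccal_\JV(a_\mp,b_\pm)=\Riccal^\pm_\GF(a_\mp,b_\pm)+\Riccal^\mp_\GF(b_\pm,a_\mp)$, completing the proof. The recurring mechanism that makes every correction term disappear is that the two curvature arguments live in mutually orthogonal subbundles, so all the non-tensorial defects of $\Rcal_0$, being proportional to $\prodesc{\cdot}{\cdot}$ of those arguments, vanish; the only care needed is to track these defects honestly through the trace manipulations rather than assuming a tensoriality or antisymmetry that $\Rcal_0$ does not possess.
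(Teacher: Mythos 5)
Your proposal is correct and follows essentially the same route as the paper: expand the trace over a frame adapted to $V_+\oplus V_-$, use metric compatibility to kill the $(Da)^*b$-terms and the cross-terms, and use the polarized axiom $[a,b]+[b,a]=\rho^*d\prodesc{a}{b}$ (vanishing on orthogonal arguments) together with skew-adjointness of $\Rcal_0$ to convert the second term of $\Rcal_\JV$ into $\Riccal^\mp_\GF(b_\pm,a_\mp)$. The only difference from the paper is bookkeeping — you expand the three terms of $\Rcal_\JV$ first and then split the frame, whereas the paper splits the frame first — so the two arguments are the same in substance.
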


\begin{proof}
Since $(Da_\pm)^*b_\mp=0$ and $\prodesc{\Rcal(c_\pm,e_\pm)a_\pm}{b_\mp}=0$ for $c_\pm,e_\pm\in\Gamma(V_\pm)$, both of which follow from $D\Gcal=0$, we have that
\[
\prodesc{\Rcal_\JV(a_\pm,b_\mp)c_\pm}{e_\pm}=\frac{1}{2}\prodesc{\Rcal_\GF^\pm(a_\pm,b_\mp)c_\pm}{e_\pm}.
\]
Note also that, because $[a_\pm,b_\mp]=-[b_\mp,a_\pm]$ and $\Rcal(a_\pm,b_\mp)\in\so(V_\mp)$,
\[
\prodesc{\Rcal(a_\pm,b_\mp)c_\mp}{e_\mp}=\prodesc{\Rcal(b_\mp,a_\pm)e_\mp}{c_\mp},
\]
which gives as well that
\[
\prodesc{\Rcal_\JV(a_\mp,b_\mp)c_\pm}{e_\mp}=\frac{1}{2}\prodesc{\Rcal^\mp_\GF(e_\mp,c_\pm)b_\mp}{a_\mp}.
\]

Let $\{e_i^\pm\}_i$ be an orthonormal frame for $V_\pm$ and let $\{\tilde e_i^\pm\}_i$ be its dual frame, by which we mean that $\prodesc{\tilde e_i^\pm}{e_j^\pm}=\delta_{ij}$ (the range of the indices $i$, $j$ may of course be different for $V_+$ and $V_-$). Then finally we have that
\begin{align*}
2\Riccal_\JV(a_\mp,b_\pm) &= 2\sum_i \prodesc{\Rcal_\JV(e^\pm_i,a_\mp)b_\pm}{\tilde e^\pm_i}+2\sum_j\prodesc{\Rcal_\JV(e^\mp_j,a_\mp)b_\pm}{\tilde e^\mp_j} \\
&=\sum_i\prodesc{\Rcal^\pm_\GF(e^\pm_i,a_\mp)b_\pm}{\tilde e^\pm_i}+\sum_j \prodesc{\Rcal^\mp_\GF(\tilde e^\mp_j,b_\pm)a_\mp}{e_j^\mp} \\
&= \Riccal^\pm_\GF(a_\mp,b_\pm)+\Riccal^\mp_\GF(b_\pm,a_\mp). \qedhere  
\end{align*}
\end{proof}


\section{Generalized Ricci curvature of a pair $(\Gcal,\div)$}

The theory of generalized connections becomes more interesting when the dependence on the connection is lost. For this, we recall the divergence operator and the torsion.

\begin{definition}[\cite{alekseev-xu}]
A \textbf{divergence operator} on $E$ is a linear operator $\div:\Gamma(E)\to\Cinf(M)$ such that
\[
\div(fa)=f\div a+\Lcal_{\rho a}f. 
\]
\end{definition}

For a generalized connection $D$ on $E$, we define its \textbf{divergence} as
\[
\div a:=\tr(Da),
\]
which is indeed a divergence operator on $E$.

The \textbf{torsion} of $D$ is the tensor field $T\in\Gamma(\wedge^3 E^*)$ given, as a map $\Gamma(\wedge^2 E^*)\to \Gamma(E)$, by
\[
T(a,b):=D_ab-D_ba-[a,b]+(Da)^*b.
\]
In the presence of a metric, the torsion is said to be of \textbf{pure type} if $T\in\Gamma(\wedge^3 V_+ \oplus \wedge ^3 V_-)$. This is the case \cite[Lem. 3.2]{garcia-fernandez-spinors} if and only if 
\begin{equation}\label{eq:pure-torsion}
D_{a_\mp}b_\pm=[a_\mp,b_\pm]_\pm.    
\end{equation}
Given a pair $(\Gcal,\div)$, metric connections with divergence $\div$ and pure-type torsion always exist, provided that $\rk V_\pm\neq 1$ \cite[Lem. 2.4, Prop. 3.3]{garcia-fernandez-spinors}. Moreover, for a pair $(\Gcal,\div)$, all metric connections with pure-type torsion and divergence $\div$ give the same tensor $\Riccal^\pm_\GF$ \cite[Prop. 4.4]{garcia-fernandez-spinors}, so we can talk of the generalized Ricci tensor of $(\Gcal,\div)$. We give an alternative and more straightforward proof of this independence, since it follows from the trace operators being parallel.

\begin{definition}
The \textbf{trace} operator is the section $\tr\in\Gamma((\End E)^*)$ given on decomposable endomorphisms by
\[
\tr(\alpha\otimes a):=\alpha(a),\quad\text{ for $\alpha\in \Gamma(E^*)$ and $a\in\Gamma(E)$.}
\]
Similarly, we define $\tr_1:E^*\otimes E^*\otimes E\to E^*$ by
\[
\tr_1(\alpha\otimes\beta\otimes a):=\alpha(a)\beta,\quad\text{ for $\alpha,\beta\in\Gamma(E^*)$ and $a\in\Gamma(E)$.}
\]
\end{definition}

Note that if $B\in\Gamma(E^*\otimes \End E)$ and we use the notation $B_ab:=B(a)b$, so that both $B_a$ and $Bb$ are endomorphisms, then $(\tr_1B)(b)=\tr(Bb)$. By construction, generalized connections preserve these traces, by which we mean that $D\tr=0$ and $D\tr_1=0$, or, equivalently, that $D_a$ commutes with $\tr$ and $\tr_1$.



\begin{proposition}
Given $(\Gcal,\div)$ and two metric connections $D,D'$ with divergence $\div$ and pure-type torsion, we have that $\Riccal^\pm_{\GF,D}=\Riccal^\pm_{\GF,D'}$.
\end{proposition}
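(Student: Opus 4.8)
The plan is to show that the difference of the two connections is a tensor that, under the constraints of fixed divergence and pure-type torsion, contributes nothing to $\Riccal^\pm_\GF$. Concretely, I would set $A := D' - D \in \Gamma(E^* \otimes \so(E))$, which is $\so(E)$-valued because both connections are metric with respect to $\prodesc{\cdot}{\cdot}$. Being metric connections also means $D\Gcal = D'\Gcal = 0$, so $A$ commutes with $\Gcal$; hence $A_a$ preserves the splitting $E = V_+ \oplus V_-$ and $A \in \Gamma(E^* \otimes (\so(V_+) \oplus \so(V_-)))$.

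The next step is to translate the two hypotheses into pointwise conditions on $A$. The equal-divergence condition $\div_D = \div_{D'}$ gives, since $\div a = \tr(Da)$ and the trace is parallel, that $\tr(A_a) = 0$ for all $a$; more usefully, splitting into $V_\pm$ pieces, $\tr_\pm(A_a|_{V_\pm}) = 0$ for each sign. The pure-type torsion condition via \eqref{eq:pure-torsion} says $D_{a_\mp} b_\pm = [a_\mp, b_\pm]_\pm = D'_{a_\mp} b_\pm$, so $A_{a_\mp} b_\pm = 0$; that is, the mixed components of $A$ vanish, and $A$ is supported on the "same-type" part $A_{a_\pm} b_\pm \in V_\pm$. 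I expect the combination of these two constraints to be exactly what kills the Ricci contribution.

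Now I would compute the difference $\Rcal_{\GF,D'}^\pm - \Rcal_{\GF,D}^\pm$ directly from \eqref{eq:R-naive}. Since $\Rcal_\GF^\pm(a_\pm, b_\mp) = \Rcal_0(a_\pm, b_\mp)$ and $\Rcal_0$ is built from $D_a D_b - D_b D_a - D_{[a,b]}$, expanding $D' = D + A$ yields the usual curvature-difference formula: a term linear in the covariant derivative of $A$ plus a quadratic bracket term $[A_a, A_b]$. Using that $A_{a_\pm}$ vanishes on $V_\mp$ and vice versa, and that we only need the $\so(V_\pm)$-component acting on the $V_\pm$ inputs of the Ricci trace, most terms drop out. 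The remaining terms I would feed into the trace $\tr_\pm(\Rcal_\GF^\pm(\cdot, a_\mp) b_\pm)$ and show they cancel using the parallelism of $\tr_\pm$ together with the traceless condition $\tr_\pm(A) = 0$.

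The main obstacle will be the linear-in-$A$ term in the curvature difference: unlike the bracket term, it does not manifestly involve the mixed components that we killed, so I will need to commute the trace past the covariant derivatives and invoke $D\tr_\pm = 0$ to rewrite it as $\tr_\pm$ of a derivative of $A$, which then vanishes because $\tr_\pm A = 0$ identically. Getting the index bookkeeping right between the first slot (which after tracing ranges over $V_\pm$) and the vanishing mixed components is where care is needed; I expect that once $A_{a_\mp} b_\pm = 0$ and the two traceless conditions are in hand, the cancellation is forced and the proof is short.
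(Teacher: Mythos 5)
Your overall strategy is exactly the paper's: set $A=D'-D$, show that compatibility with $\Gcal$ forces $A$ to preserve the splitting, that pure-type torsion kills the mixed components, that equal divergence makes $A$ traceless, and that the resulting curvature difference reduces to a single covariant-derivative-of-$A$ term whose Ricci trace vanishes by parallelism of the trace. Your prediction that the quadratic term $[A_{c_\pm},A_{a_\mp}]$ drops out once the mixed components vanish is also correct.

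The one step that, as written, would fail is the translation of the divergence condition. Since $\div a=\tr(Da)$ traces the endomorphism $b\mapsto D_b a$ (the \emph{direction} slot against the output), equal divergence gives $\tr(Aa)=0$, i.e.\ $(\tr_1 A)(a)=0$ in the paper's notation $(Aa)(b):=A_b a$. What you wrote, $\tr(A_a)=0$ and $\tr_\pm(A_a|_{V_\pm})=0$, is instead the trace of $b\mapsto A_a b$ for a fixed direction $a$; that vanishes automatically because $A_a\in\so(V_\pm)$, so it carries no information and cannot be what the divergence hypothesis buys you. The final cancellation needs precisely the nontrivial contraction $\tr_1 A^\pm=0$: the Ricci trace $\tr_\pm(\Rcal^\pm_\GF(\cdot,a_\mp)b_\pm)$ pairs the first (direction) slot of the curvature with its output, so after commuting $D_{a_\mp}$ past $\tr_1$ you land on $(D_{a_\mp}(\tr_1A^\pm))(b_\pm)$, not on a trace of $A^\pm_{a}$ over its endomorphism slots. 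Replace your condition by $\tr_1 A^\pm=0$ and the argument closes, coinciding with the paper's proof.
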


\begin{proof}
Write $D'=D+B$, for $B\in\Gamma(E^*\otimes\so(E))$. Since both $D'$ and $D$ are compatible with $\Gcal$ we have that actually $B=B^++B^-$, with $B^\pm\in\Gamma(E^*\otimes\so(V_\pm))$. Since both connections have the same mixed-type operators,
\[
D'_{a_\pm}b_\mp=D_{a_\pm}b_\mp+B^\mp_{a_\pm}b_\mp=D'_{a_\pm}b_\mp+B^\mp_{a_\pm}b_\mp,
\]
from where it follows that $B^\mp_{a_\pm}b_\mp=0$, i.e., $B^\pm\in\Gamma(V_\pm\otimes\so(V_\pm))$. Lastly, since both $D$ and $D'$ have the same divergence,
\[
\div a_\pm=\tr(D'a_\pm)=\tr(Da_\pm + Ba_\pm)=\div a_\pm+(\tr_1 B^\pm)(a_\pm),
\]
so $\tr_1B^\pm=0$.

Hence, using that $[a_\pm,b_\mp]=-[b_\mp,a_\pm]$,
\begin{align*}
\Rcal^\pm_{\GF,D'}(a_\pm,b_\mp)c_\pm &=D'_{a_\pm}(D_{b_\mp}c_\pm)-D'_{b_\mp}(D_{a_\pm}c_\pm + B^\pm_{a_\pm}c_\pm)-D_{[a_\pm,b_\mp]}c_\pm -B^\pm_{[a_\pm,b_\mp]_\pm}c_\pm \\
&=\Rcal^\pm_{\GF,D}(a_\pm,b_\mp)c_\pm + B^\pm_{a_\pm}(D_{b_\mp}c_\pm)-D_{b_\mp}(B^\pm_{a_\pm}c_\pm)+B^\pm_{D_{b_\mp}a_\pm}c_\pm \\
&=\Rcal^\pm_{\GF,D}(a_\pm,b_\mp)c_\pm-(D_{b_\mp}B^\pm)_{a_\pm}c_\pm.
\end{align*}
This gives finally that, since $\tr_1B^\pm=0$, and $D$ preserves traces,
\[
\Riccal^\pm_{\GF,D'}(b_\mp,c_\pm)=\Riccal^\pm_{\GF,D}(b_\mp,c_\pm)-(\tr_1(D_{b_\mp}B^\pm))(c_\pm)=\Riccal_{\GF,D}^\pm(b_\mp,c_\pm). \qedhere
\]
\end{proof}

This reliance on the pair $(\Gcal,\div)$ motivated yet another notion of Ricci curvature.

\begin{definition}[\cite{severa-valach}]
For $(\Gcal,\div)$, the tensor $\Riccal^\pm_{\SV}\in\Gamma(V_\mp^*\otimes V_\pm^*)$ is given by
\[
\Riccal^\pm_\SV(a_\mp,b_\pm):= \div([a_\mp,b_\pm]_\pm)-\Lcal_{\rho a_\mp}(\div b_\pm)-\tr_\pm[[\cdot,a_\mp]_\mp,b_\pm]_\pm.
\]
\end{definition}

We can now show our second main result.

\begin{theorem} \label{thm:ricci-2}
For a pair $(\Gcal,\div)$ on $E$ with $\rk V_\pm \neq 1$ we have that $\Riccal_\GF^\pm=\Riccal_\SV^\pm$.
\end{theorem}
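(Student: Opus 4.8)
The plan is to use the hypothesis $\rk V_\pm\neq 1$ to fix a metric connection $D$ with divergence $\div$ and pure-type torsion, which exists by \cite{garcia-fernandez-spinors}. By the previous proposition the tensor $\Riccal^\pm_\GF$ computed from any such $D$ depends only on the pair $(\Gcal,\div)$, so it suffices to match it with $\Riccal^\pm_\SV$ using this particular $D$. Fixing an orthonormal frame $\{e_i^\pm\}_i$ of $V_\pm$ with dual frame $\{\tilde e_i^\pm\}_i$, I would expand, straight from the definitions of $\Riccal^\pm_\GF$ and $\Rcal_0$,
\[
\Riccal^\pm_\GF(a_\mp,b_\pm)=\sum_i\prodesc{D_{e_i^\pm}D_{a_\mp}b_\pm}{\tilde e_i^\pm}-\sum_i\prodesc{D_{a_\mp}D_{e_i^\pm}b_\pm}{\tilde e_i^\pm}-\sum_i\prodesc{D_{[e_i^\pm,a_\mp]}b_\pm}{\tilde e_i^\pm},
\]
and then show that the three summands reproduce the three terms of $\Riccal^\pm_\SV$ up to two corrections that cancel.

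For the first summand, \eqref{eq:pure-torsion} gives $D_{a_\mp}b_\pm=[a_\mp,b_\pm]_\pm\in\Gamma(V_\pm)$. Because $D$ is metric, for any $v_\pm\in\Gamma(V_\pm)$ the $V_\mp$-frame terms $\prodesc{D_{e_j^\mp}v_\pm}{\tilde e_j^\mp}$ in $\tr(Dv_\pm)$ vanish, as $D_{e_j^\mp}v_\pm\in\Gamma(V_\pm)$ is orthogonal to $\tilde e_j^\mp\in\Gamma(V_\mp)$; hence $\div v_\pm=\sum_i\prodesc{D_{e_i^\pm}v_\pm}{\tilde e_i^\pm}$, and with $v_\pm=[a_\mp,b_\pm]_\pm$ the first summand becomes $\div([a_\mp,b_\pm]_\pm)$. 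For the third summand I would split $[e_i^\pm,a_\mp]=[e_i^\pm,a_\mp]_\pm+[e_i^\pm,a_\mp]_\mp$; applying \eqref{eq:pure-torsion} to the mixed part yields $D_{[e_i^\pm,a_\mp]_\mp}b_\pm=[[e_i^\pm,a_\mp]_\mp,b_\pm]_\pm$, whose trace is exactly $-\tr_\pm[[\cdot,a_\mp]_\mp,b_\pm]_\pm$, leaving the correction $-\sum_i\prodesc{D_{[e_i^\pm,a_\mp]_\pm}b_\pm}{\tilde e_i^\pm}$.

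The middle summand is where trace-preservation of $D$ enters. Writing $M:=Db_\pm\in\Gamma(\End E)$ so that $\div b_\pm=\tr M$, and using $D\tr=0$, I would compute $\Lcal_{\rho a_\mp}(\div b_\pm)=\tr(D_{a_\mp}M)$ with $(D_{a_\mp}M)(c)=D_{a_\mp}(D_cb_\pm)-D_{D_{a_\mp}c}b_\pm$. Once more the $V_\mp$-frame part of this trace vanishes by metricity, so
\[
\Lcal_{\rho a_\mp}(\div b_\pm)=\sum_i\prodesc{D_{a_\mp}(D_{e_i^\pm}b_\pm)}{\tilde e_i^\pm}-\sum_i\prodesc{D_{D_{a_\mp}e_i^\pm}b_\pm}{\tilde e_i^\pm}.
\]
Thus the middle summand equals $-\Lcal_{\rho a_\mp}(\div b_\pm)$ together with the correction $-\sum_i\prodesc{D_{D_{a_\mp}e_i^\pm}b_\pm}{\tilde e_i^\pm}$.

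The crux, and the step most prone to sign errors, is that these two corrections cancel. For this I would use that the mixed bracket is skew, which follows from $2[a,a]=\rho^*d\prodesc{a}{a}$ and $\prodesc{a_\mp}{e_i^\pm}=0$, together with \eqref{eq:pure-torsion} in the form $D_{a_\mp}e_i^\pm=[a_\mp,e_i^\pm]_\pm=-[e_i^\pm,a_\mp]_\pm$. Substituting this, the correction from the middle summand becomes $+\sum_i\prodesc{D_{[e_i^\pm,a_\mp]_\pm}b_\pm}{\tilde e_i^\pm}$, which is precisely the negative of the correction from the third summand, so the two vanish together and the surviving terms assemble into $\Riccal^\pm_\SV(a_\mp,b_\pm)$. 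I expect the only genuine care to lie in this bookkeeping of the frame-derivative corrections and in repeatedly checking that all $V_\mp$-contributions drop out by metric compatibility; the rest is direct substitution.
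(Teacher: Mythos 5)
Your proposal is correct and follows essentially the same route as the paper: fix a metric connection with divergence $\div$ and pure-type torsion, expand $\Riccal^\pm_\GF$ in an orthonormal frame, identify the three terms of $\Riccal^\pm_\SV$ via \eqref{eq:pure-torsion} and metricity, and cancel the two frame-derivative corrections using $D_{a_\mp}e_i^\pm=-[e_i^\pm,a_\mp]_\pm$ together with the parallelism of the trace. Your only (harmless) deviation is to invoke $D\tr=0$ directly as $\Lcal_{\rho a_\mp}(\tr(Db_\pm))=\tr(D_{a_\mp}(Db_\pm))$, which slightly streamlines the paper's equivalent manipulation with $(Db_\pm)^*$.
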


\begin{proof}Let $D$ be a metric connection with divergence $\div$ and pure-type torsion. From \eqref{eq:pure-torsion}, we have that, for $c_\pm\in V_\pm$,
\[
[[c_\pm,a_\mp]_\mp,b_\pm]_\pm=D_{[c_\pm,a_\mp]_\mp}b_\pm.
\]
On the other hand,
\[
\div([a_\mp,b_\pm]_\pm)=\div_D(D_{a_\mp}b_\pm)=\tr(D (D_{a_\mp}b_\pm))=\sum_i \prodesc{\tilde e_i^\pm}{D_{e_i^\pm}D_{a_\mp}b_\pm},
\]
(the terms from $V_\mp$ vanish because $D$ is compatible with $\Gcal$), and
\[
\Lcal_{\rho a_\mp}(\div b_\pm)=\Lcal_{\rho a_\mp}(\tr(D b_\pm))=\sum_i (\prodesc{D_{a_\mp}\tilde e_i^\pm}{D_{e_i^\pm}b_\pm}+\prodesc{\tilde e_i^\pm}{D_{a_\mp}D_{e_i^\pm}b_\pm}).
\]
Therefore, using that $[e_i^\pm,a_\mp]=-[a_\mp,e_i^\pm]$ and that $D$ preserves traces:
\begin{align*}
\Riccal^\pm_\SV(a_\mp,b_\pm) &= \sum_i \big( \prodesc{\tilde e_i^\pm}{D_{e_i^\pm}D_{a_\mp}b_\pm-D_{a_\mp}D_{e_i^\pm}b_\pm-D_{[e_i^\pm,a_\mp]_\mp}b_\pm}-\prodesc{D_{a_\mp}\tilde e_i^\pm}{D_{e_i^\pm}b_\pm} \big) \\
&=\sum_i \big( \prodesc{\tilde e_i^\pm}{\Rcal_\GF^\pm(e_i^\pm,a_\mp)b_\pm +D_{[e_i^\pm,a_\mp]_\pm}b_\pm}-\prodesc{D_{a_\mp}\tilde e_i^\pm}{D_{e_i^\pm}b_\pm} \big) \\
&= \Riccal^\pm_\GF(a_\mp,b_\pm)+\sum_i \big( \prodesc{\tilde e_i^\pm}{D_{[e_i^\pm,a_\mp]_\pm}b_\pm}-\prodesc{D_{a_\mp}\tilde e_i^\pm}{D_{e_i^\pm}b_\pm} \big) \\
&=\Riccal^\pm_\GF(a_\mp,b_\pm)-\sum_i \big( \prodesc{\tilde e_i^\pm}{D_{D_{a_\mp}e_i^\pm}b_\pm}+\prodesc{D_{a_\mp}\tilde e_i^\pm}{D_{e_i^\pm}b_\pm} \big) \\
&=\Riccal^\pm_\GF(a_\mp,b_\pm)-\sum_i \big( \prodesc{(D b_\pm)^*\tilde e_i^\pm}{D_{a_\mp}e_i^\pm}+\prodesc{(D b_\pm)^*(D_{a_\mp}\tilde e_i^\pm)}{e_i^\pm} \big) \\
&=\Riccal^\pm_\GF(a_\mp,b_\pm)-\sum_i \big( \Lcal_{\rho a_\mp}\prodesc{(D b_\pm)^* \tilde e_i^\pm}{e_i^\pm}- \prodesc{D_{a_\mp}((D b_\pm)^*\tilde e_i^\pm)}{e_i^\pm} \\
& \qquad \qquad +\prodesc{(D b_\pm)^*(D_{a_\mp}\tilde e_i^\pm)}{e_i^\pm} \big) \\
&=\Riccal^\pm_\GF(a_\mp,b_\pm)-\sum_i \big( \Lcal_{\rho a_\mp}\prodesc{(D b_\pm)^* \tilde e_i^\pm}{e_i^\pm}- \prodesc{(D_{a_\mp}(D b_\pm)^*)\tilde e_i^\pm}{e_i^\pm} \big)\\
&=\Riccal^\pm_\GF(a_\mp,b_\pm)-\Lcal_{\rho a_\mp}(\tr((D b_\pm)^*))+\tr(D_{a_\mp}(D b_\pm)^*) \\
&=\Riccal^\pm_\GF(a_\mp,b_\pm)-(D_{a_\mp}\tr)((D b_\pm)^*) \\
&=\Riccal^\pm_\GF(a_\mp,b_\pm). \qedhere
\end{align*}
\end{proof}

As a corollary of Theorems \ref{thm:ricci} and \ref{thm:ricci-2}, we also have that 
\[ \Riccal_\SSCV(a_\mp,b_\pm)=\Riccal^\pm_\SV(a_\mp,b_\pm)+\Riccal^\mp_\SV(b_\pm,a_\mp).\]
It was pointed out to us by the referee that this identity is  proved in \cite[Prop. 4.5]{streets-strickland-constable-valach}. In this sense, Theorem \ref{thm:ricci-2} can be considered as a refinement of it.

\section{Total curvatures}

One of the advantages of $\Rcal_\GF$ over $\Rcal_\JV$ is that it can be used to define tensorial curvatures for $E$-connections on vector bundles. An $E$-connection on a vector bundle $W\to M$ is a linear map $D:\Gamma(W)\to\Gamma(E^*\otimes W)$ satisfying the Leibniz rule
\[
D(fw)=\rho^*df\otimes w+f Dw, \text{for $f\in\Cinf(M)$ and $w\in\Gamma(W)$.}
\]
If $E$ carries a generalized metric $\Gcal$, then we can define the curvatures of $D$ as the tensors $\Rcal^\pm\in\Gamma(V_\pm^*\otimes V^*_\mp\otimes\End W)$ by restricting the naïve curvature to the corresponding subbundles, and we can assemble these into a single total curvature.

\begin{definition}
The \textbf{total curvature} of $D$ is the tensor $\Rcal\in\Gamma(E^*\otimes E^*\otimes\End W)$ given by $\Rcal:=\Rcal^++\Rcal^-$. Explicitly,
\[
\Rcal(a,b)=\Rcal^+(a_+,b_-)+\Rcal^-(a_-,b_+).
\]
\end{definition}

Notice that because $\Rcal^+(a_+,b_-)=-\Rcal^-(b_-,a_+)$, we have that actually
\[
\Rcal\in\Gamma(\wedge^2E^*\otimes\End W).
\]

In the case of an $E$-connection on $E$, taking the trace of the total curvature gives the total Ricci curvature.

\begin{definition}\label{def:total-Ricci}
Let $D$ be a generalized connection on $E$ with total curvature $\Rcal$ for some generalized metric $\Gcal$. The \textbf{total generalized Ricci curvature} is the tensor $\Riccal\in\Gamma(E^*\otimes E^*)$ given by
\[
\Riccal(a,b):=\tr(\Rcal(\cdot,a)b).
\]
\end{definition}

In the case that $D$ is compatible with $\Gcal$, it is actually given by a familiar formula.

\begin{lemma} \label{lem:total-ricci}
If $D$ is a generalized connection on $E$ compatible with $\Gcal$, then 
\[
\Riccal=\Riccal_\GF^++\Riccal_\GF^-.
\]
\end{lemma}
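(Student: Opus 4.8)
The plan is to compute the total Ricci curvature $\Riccal(a,b)$ directly from its definition by expanding the trace and using the decomposition of the total curvature $\Rcal = \Rcal^+ + \Rcal^-$. The key observation is that when $D$ is compatible with $\Gcal$, the naïve curvature $\Rcal_0$ restricted to the relevant subbundles preserves the splitting $E = V_+ \oplus V_-$, so the trace over $E$ decomposes into a trace over $V_+$ plus a trace over $V_-$, and each piece matches one of the $\Riccal_\GF^\pm$ tensors.

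First I would unwind the definition: for a fixed pair $a,b \in \Gamma(E)$, write $\Riccal(a,b) = \tr(\Rcal(\cdot,a)b)$, and expand using an orthonormal frame $\{e_i^+\}_i \cup \{e_j^-\}_j$ adapted to the splitting, with dual frame $\{\tilde e_i^+\}_i \cup \{\tilde e_j^-\}_j$. The trace then splits as a sum over the $V_+$-indices plus a sum over the $V_-$-indices. Next I would substitute $\Rcal(c,a)b = \Rcal^+(c_+,a_-)b + \Rcal^-(c_-,a_+)b$ and use compatibility ($D\Gcal = 0$, so $\Rcal^\pm(\cdot,\cdot) \in \so(V_\pm)$ preserves $V_\pm$) to see which terms survive. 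Concretely, $\Rcal^+(c_+,a_-)b = \Rcal^+(c_+,a_-)b_+$ lands in $V_+$, so pairing against a dual frame element only picks up the $V_+$-frame contributions; symmetrically for $\Rcal^-$. After also decomposing $a = a_+ + a_-$ and $b = b_+ + b_-$, the surviving sums reorganize precisely into $\tr_+(\Rcal^+(\cdot, a_-)b_+) + \tr_-(\Rcal^-(\cdot,a_+)b_-)$, which by the definitions of $\Riccal_\GF^\pm$ equals $\Riccal_\GF^+(a_-,b_+) + \Riccal_\GF^-(a_+,b_-)$. The final step is to recognize that this is exactly the value of $\Riccal_\GF^+ + \Riccal_\GF^-$ evaluated at $(a,b)$, since each $\Riccal_\GF^\pm$ is supported only on its mixed arguments.

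The step I expect to require the most care is the bookkeeping of which frame-and-projection combinations contribute: I must verify that the "diagonal" pieces (where $\Rcal^+$ is paired against a $V_-$-dual vector, or where $b_-$ is fed into $\Rcal^+$) genuinely vanish, rather than cancelling by accident. All of these vanishings trace back to $D$-compatibility with $\Gcal$, exactly as in the opening of the proof of Theorem \ref{thm:ricci}, where it is already noted that $\Rcal(c_\pm,e_\pm)a_\pm$ has no $V_\mp$-component and $(Da_\pm)^* b_\mp = 0$. I would therefore lean on those same two consequences of $D\Gcal = 0$ to discard the unwanted terms cleanly, and the identity then falls out by matching traces with the definitions.
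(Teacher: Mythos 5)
Your proposal is correct and follows essentially the same route as the paper: expand the trace in an orthonormal frame adapted to $E=V_+\oplus V_-$, use $\Rcal(c,a)b=\Rcal^+(c_+,a_-)b+\Rcal^-(c_-,a_+)b$, and invoke $D\Gcal=0$ to discard the cross terms, leaving $\tr_+(\Rcal^+_\GF(\cdot,a_-)b_+)+\tr_-(\Rcal^-_\GF(\cdot,a_+)b_-)=\Riccal^+_\GF(a_-,b_+)+\Riccal^-_\GF(a_+,b_-)$. Your extra care about which frame-and-projection combinations survive is exactly the content the paper compresses into the phrase ``by using the compatibility of $D$ with $\Gcal$.''
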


\begin{proof}
By using the compatibility of $D$ with $\Gcal$:
\begin{align*}
\Riccal(a,b) &=\sum_i\prodesc{\Rcal(e_i^+,a)b}{\tilde e_i^+}+\sum_j\prodesc{\Rcal(e_j^-,a)b}{\tilde e_j^-} \\
&=\sum_i\prodesc{\Rcal_\GF^+(e_i^+,a_-)b_+}{\tilde e_i^+}+\sum_j\prodesc{\Rcal_\GF^-(e_j^-,a_+)b_-}{\tilde e_j^-} \\
&= \Riccal_\GF^+(a_-,b_+)+\Riccal_\GF^-(a_+,b_-). \qedhere
\end{align*}
\end{proof}

In \cite{garcia-fernandez-spinors} the alternative tensor $\Riccal'_\GF:=\Riccal^+_\GF-\Riccal^-_\GF$ was considered, and the question of when it is skew-symmetric was posed, because of its relevance to the study of the generalized Ricci flow. Notice that $\Riccal'_\GF(a,b)=\Riccal(a,\Gcal b)$, so we can translate the question of whether $\Riccal'_\GF$ is skew-symmetric to $\Riccal$ being symmetric. 

\begin{lemma}\label{lem:Ric-sym-skew}
For $(\Gcal,\div)$ on $E$, we have that $\Riccal(a,\Gcal b)=-\Riccal(\Gcal a,b)$. In particular, $\Riccal'_\GF$ is skew-symmetric if and only if $\Riccal$ is symmetric.
\end{lemma}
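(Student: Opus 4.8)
The plan is to reduce everything to the explicit formula obtained inside the proof of Lemma~\ref{lem:total-ricci}. For a metric connection $D$ with divergence $\div$ and pure-type torsion (which exists since $\rk V_\pm\neq 1$ and produces the well-defined tensors $\Riccal^\pm_\GF$ of the pair), that proof shows
\[
\Riccal(a,b)=\Riccal^+_\GF(a_-,b_+)+\Riccal^-_\GF(a_+,b_-).
\]
The only structural input I need beyond this is that $\Gcal$ acts as $\pm1$ on $V_\pm$, so that $(\Gcal b)_+=b_+$, $(\Gcal b)_-=-b_-$, and likewise for $a$.

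First I would substitute $\Gcal b$ and $\Gcal a$ into this formula. Because the two summands live on complementary blocks that $\Gcal$ separates by sign, each substitution flips exactly one sign:
\[
\Riccal(a,\Gcal b)=\Riccal^+_\GF(a_-,b_+)-\Riccal^-_\GF(a_+,b_-),
\]
\[
\Riccal(\Gcal a,b)=-\Riccal^+_\GF(a_-,b_+)+\Riccal^-_\GF(a_+,b_-).
\]
Comparing the right-hand sides gives $\Riccal(a,\Gcal b)=-\Riccal(\Gcal a,b)$ at once, which is the first assertion. This step is pure sign bookkeeping; all the content sits in the decomposition $\Riccal=\Riccal^+_\GF+\Riccal^-_\GF$ together with the action of $\Gcal$ by $\pm1$ on the two pieces.

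For the second assertion I would use $\Riccal'_\GF(a,b)=\Riccal(a,\Gcal b)$ and compute the symmetric part of $\Riccal'_\GF$ by invoking the first assertion with the roles of $a$ and $b$ swapped, namely $\Riccal(b,\Gcal a)=-\Riccal(\Gcal b,a)$:
\[
\Riccal'_\GF(a,b)+\Riccal'_\GF(b,a)=\Riccal(a,\Gcal b)+\Riccal(b,\Gcal a)=\Riccal(a,\Gcal b)-\Riccal(\Gcal b,a).
\]
Since $\Gcal$ is a bijection, replacing $\Gcal b$ by an arbitrary section shows that the right-hand side vanishes for all $a,b$ precisely when $\Riccal$ is symmetric, while the left-hand side vanishes for all $a,b$ precisely when $\Riccal'_\GF$ is skew-symmetric. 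This establishes the stated equivalence.

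I do not anticipate a genuine obstacle: the statement is formal once Lemma~\ref{lem:total-ricci} is available. The only points demanding care are to evaluate the two half-Ricci tensors on the correctly ordered pairs of subbundle projections, so that the $\pm$ signs coming from $\Gcal$ attach to the intended summand, and to keep precise track of which argument $\Gcal$ multiplies when interchanging $a$ and $b$ in the final identity.
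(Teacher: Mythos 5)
Your proof is correct and follows essentially the same route as the paper: both parts reduce to the decomposition $\Riccal=\Riccal^+_\GF+\Riccal^-_\GF$ from Lemma~\ref{lem:total-ricci} together with the $\pm 1$ action of $\Gcal$ on $V_\pm$. Your symmetric-part computation for the second assertion is a slightly more economical packaging of the paper's chain of equalities plus ``the converse is analogous,'' but the content is identical.
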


\begin{proof}
By a straightforward computation:
\[
\begin{aligned}
\Riccal(a,\Gcal b) &= \Riccal^+_\GF(a_-,b_+)-\Riccal^-_\GF(a_+,b_-) \\
&= -\Riccal^+_\GF(\Gcal a_-, b_+)-\Riccal^-_\GF(\Gcal a_+,b_-) \\
&=-\Riccal(\Gcal a,b).
\end{aligned}
\]
If $\Riccal'_\GF$ is skew-symmetric, then
\[
\Riccal(a,b)=\Riccal'_\GF(a,\Gcal b)=-\Riccal'_\GF(\Gcal b,a)=-\Riccal(\Gcal b,\Gcal a)=\Riccal(b,a).
\]
The converse is analogous.
\end{proof}

From our perspective, the tensor $\Riccal$ seems like a natural choice than $\Riccal'_\GF$ for the study of generalized Ricci flow, as we will now explain. In \cite{garcia-fernandez-spinors}, generalized Ricci flow is presented as an equation for a family of pairs $(\Gcal_t,\div_t)$. Observe that if $\Gcal_t$ is a generalized metric for each $t$, then $\partial_t\Gcal_t$ is a symmetric endomorphism that interchanges $V_+$ and $V_-$. Indeed, since $\Gcal_t^*=\Gcal_t$, we have that $(\partial_t\Gcal_t)^*=\partial_t\Gcal_t$, and so, if $\Gcal_ta_\pm=\pm a_\pm$, and the same for $b_\pm$,
\[
0=\der{}{t}\prodesc{\Gcal_t a_\pm}{\Gcal_t b_\pm}=\pm\prodesc{\partial_t\Gcal_t a_\pm}{b_\pm}\pm\prodesc{a_\pm}{\partial_t\Gcal_t b_\pm}=\pm 2\prodesc{\partial_t\Gcal_t a_\pm}{b_\pm},
\]
i.e., $\partial_t\Gcal_t:V_\pm\to V_\mp$. Let $\partial_t\Gcal_t^\pm$ be the piece corresponding to $V_\mp$, then the generalized Ricci flow in \cite{garcia-fernandez-spinors} is the equation
\begin{equation} \label{eq:generalized-ricci-flow}
\partial_t\Gcal_t^+=-2\Riccal^+_\GF(\Gcal_t,\div_t),
\end{equation}
as endomorphisms $V_-\to V_+$. One can of course consider the analogous equation for the $V_+$ piece. Furthermore, one could assemble these two into a single equation
\begin{equation} \label{eq:total-generalized-ricci-flow}
\partial_t\Gcal_t=-2\Riccal(\Gcal_t,\div_t).
\end{equation}
Observe that this equation only makes sense if $\Riccal$ is symmetric. Indeed, the tangent space to the space of generalized metrics at $\Gcal_t$ is composed of symmetric endomorphisms $\dot\Gcal_t$ that interchange $V_+$ and $V_-$ and such that $\Gcal_t\dot\Gcal_t+\dot\Gcal_t\Gcal_t=0$. The tensor $\Riccal$ satisfies this last property by Lemma \ref{lem:Ric-sym-skew}, so that it defines a vector field on the space of generalized metrics if and only if it is symmetric. In this case, \eqref{eq:total-generalized-ricci-flow} is well defined and equivalent to \eqref{eq:generalized-ricci-flow}. 

Similar considerations were made in \cite{garcia-fernandez-spinors} using $\Riccal'_\GF$. The difference between the use of $\Riccal$ or $\Riccal'_\GF$ is one of convention or perspective: a generalized metric $\Gcal$ can be regarded as a new product $(\cdot,\cdot):=\prodesc{\Gcal\cdot}{\cdot}$ on $E$, and the variation of the family of products $(\cdot,\cdot)_t$ can be regarded as an endomorphism of $E$ using $\prodesc{\cdot}{\cdot}$ or $(\cdot,\cdot)_t$. The first approach leads to $\partial_t\Gcal_t$ and to generalized Ricci flow in the form \eqref{eq:total-generalized-ricci-flow}, while the second one leads to $\Gcal_t^{-1}\partial_t\Gcal_t$ and to generalized Ricci flow as in \cite{garcia-fernandez-spinors}.




We finish by giving an answer to the question of when $\Riccal$ is symmetric. When we shared our work, we were told that it has been proven independently (as it has later appeared in \cite[Lem. 2.7]{garcia-fernandez-gonzalez-molina-streets}) and also that, given Theorem \ref{thm:ricci-2} above, the answer can be recovered from the proof of \cite[Prop. 3.2]{severa-valach}.






A key ingredient is the following definition.

\begin{definition}[\cite{severa-valach}]
A pair $(\Gcal,\div)$ on $E$ is said to be \textbf{compatible} if
\[
\div([a_\mp,b_\pm])-\Lcal_{\rho a_\mp}(\div b_\pm)+\Lcal_{\rho b_\pm}(\div a_\mp)=0,\quad\text{ for $a_\pm,b_\pm\in\Gamma(V_\pm)$.} \qedhere
\]
\end{definition} 

This condition can be phrased as $\Gcal$ being invariant under the 1-parameter family of Courant algebroid automorphisms generated by $\div$ \cite{streets-strickland-constable-valach}. Finally:

\begin{theorem} \label{prop:symmetric-ricci}
For a pair $(\Gcal,\div)$ on $E$ we have that $\Riccal$ is symmetric if an only if $(\Gcal,\div)$ is compatible.
\end{theorem}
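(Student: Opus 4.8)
The plan is to reduce the symmetry of $\Riccal$ to an equality between its two mixed components and then to recognize the discrepancy with the compatibility condition as a difference of two traces that vanishes by cyclicity. Concretely, I would first invoke Lemma~\ref{lem:total-ricci}, which gives $\Riccal(a,b)=\Riccal_\GF^+(a_-,b_+)+\Riccal_\GF^-(a_+,b_-)$. Since this expression has no $V_+\times V_+$ or $V_-\times V_-$ component, evaluating $\Riccal(a,b)=\Riccal(b,a)$ on pure arguments shows, by bilinearity, that $\Riccal$ is symmetric if and only if
\[
\Riccal_\GF^+(a_-,b_+)=\Riccal_\GF^-(b_+,a_-)\qquad\text{for all }a_-\in\Gamma(V_-),\ b_+\in\Gamma(V_+).
\]

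Next I would apply Theorem~\ref{thm:ricci-2} to replace each $\Riccal_\GF^\pm$ by $\Riccal_\SV^\pm$ and expand the definition of the latter. Using $[a_-,b_+]=-[b_+,a_-]$ (valid because $\prodesc{a_-}{b_+}=0$) together with the linearity of $\div$, the terms $\div([a_-,b_+]_+)$ and $\div([b_+,a_-]_-)$ combine into $\div([a_-,b_+])$, while the two Lie-derivative terms assemble precisely into the compatibility expression. The equality above thus becomes
\[
\div([a_-,b_+])-\Lcal_{\rho a_-}(\div b_+)+\Lcal_{\rho b_+}(\div a_-)=\tr_+[[\cdot,a_-]_-,b_+]_+-\tr_-[[\cdot,b_+]_+,a_-]_-,
\]
so it remains only to show that the right-hand side vanishes identically.

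The crux is the trace identity $\tr_+[[\cdot,a_-]_-,b_+]_+=\tr_-[[\cdot,b_+]_+,a_-]_-$. I would first check that $P:=[\,\cdot\,,a_-]_-\colon V_+\to V_-$ and $Q:=[\,\cdot\,,b_+]_+\colon V_-\to V_+$ are genuine bundle maps: for $c_+\in\Gamma(V_+)$ the Leibniz defect $[fc_+,a_-]=f[c_+,a_-]-(\Lcal_{\rho a_-}f)c_+$ is a multiple of $c_+\in V_+$ and hence is annihilated by the projection to $V_-$, so $P$ is tensorial, and symmetrically for $Q$. The left-hand trace is then $\tr_+(Q\circ P)$ and the right-hand one is $\tr_-(P\circ Q)$, and these coincide by the cyclic property of the trace for linear maps between $V_+$ and $V_-$; this holds pointwise and uses neither $\div$ nor any choice of connection.

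Together these steps turn the characterization of symmetry into the vanishing of the compatibility expression, i.e.\ $\Riccal$ is symmetric if and only if $(\Gcal,\div)$ is compatible. I expect the trace identity to be the main obstacle: the $+$ and $-$ formulas for $\Riccal_\SV^\pm$ look genuinely asymmetric, and the key realization is that the two $\tr_\pm$ terms are the traces of $Q\circ P$ and $P\circ Q$ for one and the same pair of off-diagonal bundle maps, so that their equality is nothing but cyclicity of the trace.
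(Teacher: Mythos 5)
Your proposal is correct and follows essentially the same route as the paper: reduce via Lemma~\ref{lem:total-ricci} to the equality $\Riccal_\GF^+(a_-,b_+)=\Riccal_\GF^-(b_+,a_-)$, substitute the $\Riccal_\SV^\pm$ formulas from Theorem~\ref{thm:ricci-2}, and observe that the two $\tr_\pm$ terms are $\tr(Q\circ P)$ and $\tr(P\circ Q)$ for the off-diagonal maps $[\cdot,a_-]_-$ and $[\cdot,b_+]_+$, hence equal by cyclicity. Your explicit check that these maps are tensorial, and the sign bookkeeping via $[a_-,b_+]=-[b_+,a_-]$, are exactly the points the paper uses implicitly.
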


\begin{proof}
By Lemma \ref{lem:total-ricci}, $\Riccal$ is symmetric in and only if
\[
\Riccal^\pm_\GF(a_\mp,b_\pm)=\Riccal^\mp_\GF(b_\pm,a_\mp).
\]
By Theorem \ref{thm:ricci-2}, the left-hand side is
\begin{equation} \label{eq:sv-left}
\div([a_\mp,b_\pm]_\pm)-\Lcal_{\rho a_\mp}(\div b_\pm)-\tr_\pm [[\cdot,a_\mp]_\mp,b_\pm]_\pm,
\end{equation}
and the right-hand side,
\begin{equation} \label{eq:sv-right}
\div([b_\pm,a_\mp]_\mp)-\Lcal_{\rho b_\pm}(\div a_\mp)-\tr_\mp [[\cdot,b_\pm]_\pm,a_\mp]_\mp.
\end{equation}
Note that $[\cdot,a_\pm]_\pm$ defines a morphism $V_\mp\to V_\pm$, so that
\begin{align*}
[[\cdot,a_\mp]_\mp,b_\pm]_\pm: & V_\pm\to V_\pm, \\
[[\cdot,b_\pm]_\pm,a_\mp]_\mp: & V_\mp\to V_\mp,
\end{align*}
and by the cyclic property of the trace we have that
\[
\tr_\pm [[\cdot,a_\mp]_\mp,b_\pm]_\pm=\tr_\mp [[\cdot,b_\pm]_\pm,a_\mp]_\mp.
\]
Hence, $\eqref{eq:sv-left}=\eqref{eq:sv-right}$ if and only if
\[
\div([a_\mp,b_\pm])-\Lcal_{\rho a_\mp}(\div b_\pm)+\Lcal_{\rho b_\pm}(\div a_\mp)=0,
\]
i.e., if and only if $(\Gcal,\div)$ is compatible.
\end{proof}

\bigskip
\noindent \textbf{Acknowledgements: } We are grateful to Mario García-Fernández for pointing out the question that led to this work and insightful discussions on preliminary versions. We also thank Filip Mou\v{c}ka, Pavol \v{S}evera, Fridrich Valach and the referee for helpful comments. 



\bibliographystyle{alpha}
\bibliography{Ricci-biblio}

\end{document}